\documentclass[12pt, twoside, leqno]{article}
%\usepackage[left=4cm,right=3.15cm,top=2.54cm,bottom=2.54cm]{geometry}
% Modif. June 8, 2018
% Send comments to publ@impan.pl

% Using pdflatex is preferred
\usepackage[pagewise]{lineno} 
%\linenumbers

\usepackage{amsmath,amsthm}
\usepackage{amssymb}

%% Optional, but useful:
\usepackage{enumitem}

%% Add only when there are figures:
\usepackage{graphicx}

%% If you are using letters of the Polish alphabet, add
\usepackage[T1]{fontenc}
%% E.g. the name "Zoladz" is then coded \.Zo{\l}\k{a}d\'z

%% In the running head, replace first names by initials
%% and give an abbreviation of the title.

\pagestyle{myheadings}
\markboth{ L. Tang, L. Meng and L. Chen}{Super-Biderivations and Linear Super-Commuting Maps on the Lie Superalgebras}

%%%%%

%% Numbered objects of "theorem" style (text italicized).
%% Below, the optional parameters indicate that all objects are numbered together, and "by section".
%% However, you are welcome to use any other numbering system of your choice, as well as your own abbreviations.

\newtheorem{theorem}{Theorem}[section]

\newtheorem{lemma}[theorem]{Lemma}

%% A numbered theorem with a fancy name:

%% Numbered objects of "non-theorem" style (text roman):

\theoremstyle{definition}
\newtheorem{definition}[theorem]{Definition}
\newtheorem{remark}[theorem]{Remark}

%% An unnumbered object:

%% Equations numbered by section (optional):

\numberwithin{equation}{section}

%%%%%%%%%%% For IMPAN journals:

\frenchspacing

\textwidth=16.5cm
\textheight=20cm
\parindent=16pt
\oddsidemargin=-0.5cm
\evensidemargin=-0.5cm
\topmargin=-0.5cm

%%%%%%%%%%%%%%%%%%%%%%%%%%%%%%%%%%%
%%%%%%%%%%%%%%%%%%%%%%%%%%%%%%%%%%%

%%%% Put your macros here:

%%%% Here are two examples:

%%%%%%%%%%%%%

\begin{document}

%%%%% To ease editing, for IMPAN journals add:

\baselineskip=17pt

%%%%%%%%%%%%%%%%

\title{Super-Biderivations and Linear Super-Commuting Maps on the Lie Superalgebras}

\author{\textsc{Liming Tang}\\
\small{School of Mathematical Sciences}\\
\small{Harbin Normal University}\\
%\'Sniadeckich 8\\
\small{150025 Harbin, China}\\
\small{E-mail: limingtang@hrbnu.edu.cn}
\and
\textsc{Lingyi Meng}\\
\small{School of Mathematical Sciences}\\
\small{Harbin Normal University}\\
%\'Sniadeckich 8\\
\small{150025 Harbin, China}\\
\small{E-mail: 1548443934@qq.com}
\and
\textsc{Liangyun Chen}\footnote{corresponding author}\\
\small{School of Mathematics and Statistics}\\
%{\L}ojasiewicza 6\\
\small{Northeast Normal University}\\
\small{130024 Changchun, China}\\
\small{E-mail: chenly640@nenu.edu.cn}}

\date{}

\maketitle

%% Classification and key words; note that the 2010 classification is used:

\renewcommand{\thefootnote}{}

\footnote{2010 \emph{Mathematics Subject Classification}: Primary 17B05; Secondary 17B40.}

\footnote{\emph{Key words and phrases}: Lie superalgebras,  super-commuting maps, super-biderivations.}
\renewcommand{\thefootnote}{\arabic{footnote}}
\setcounter{footnote}{0}
%%%%%%%%%
\begin{abstract}
Suppose the ground field is algebraically closed and of characteristic different from 2. In this paper, we described the intrinsic connections among linear super-commuting maps, super-biderivations and  centroids  for Lie superalgebras satisfying certain assumptions. This  is a generalization of the results of Bre$\check{s}$ar and Zhao on Lie algebras.
\end{abstract}

\section{Introduction}
Throughout we work over
  an algebraically closed field $\mathbb{F}$ of characteristic different from 2 and all vector
spaces and algebras are over $\mathbb{F}$. The aim is to describe the interactions among linear super-commuting maps, super-biderivations and  centroids  for Lie superalgebras satisfying certain conditions. Most results of commuting maps and  biderivations on Lie algebras are from these papers \cite{LL00,WYC11,WY13,C16,HWX16,T16,GLZ18,BZ18}. The idea of this paper springs from biderivations and linear commuting maps on the Lie algebras \cite{BZ18}.
 Bre$\check{s}$ar and Zhao \cite{BZ18} presented two results for a perfect and centerless Lie algebra $L$. One is that  every skew-symmetric biderivation  is of the form of the centroid of $L.$ The other is that  every commuting linear map from $L$ to $L$ lies in the centroid of $L$ under a strong assumption. The parallel results are obtained for Lie superalgebras in this paper. Our reason for doing so goes beyond a pure generalization. Rather, we expect that linear commuting maps on Lie superalgebras will play an important role in the theory of functional identities on
Lie superalgebras. The reason  is a fact that the description of linear commuting maps on Lie algebras can
be viewed as a testing case for developing the theory of functional identities on
Lie algebras \cite{BZ18}. The root lies in additive commuting
maps on prime rings \cite{B93} which eventually led to the theory of functional identities on
noncommutative rings \cite{BCM07}.

An interest in studying super-biderivations and super-commuting maps on superalgebras has grown more recently \cite{XWH16,YT17, FD17,YCC18, CS19}.
Xia, Wang and Han \cite{XWH16}  proved that any super-skewsymmetric super-biderivation of Super-Virasoro Algebras is inner and obtained the form of every linear super-commuting map. Yuan and Tang \cite{YT17} characterized super-biderivations of classical simple Lie superalgebras over the complex field $\mathbb{C}$  and proved that all super-biderivations of classical simple Lie superalgebras are inner super-biderivations. Fan and Dai \cite{FD17}  proved
that all super-biderivations on the centerless super-Virasoro algebras are inner super-biderivations and studied
the linear super-commuting maps on the centerless super-Virasoro algebras.
Yuan, Chen and Cao \cite{YCC18} characterized the super-biderivations of Cartan
type Lie superalgebras over the complex field $\mathbb{C}$ and  proved that
all super-biderivations of Cartan type simple Lie superalgebras are inner super-biderivations.
Cheng and Sun \cite{CS19}  proved that every super-skewsymmetric super-biderivation of the twisted $N=2$ superconformal algebra is inner. The results in these papers \cite{XWH16,FD17,YT17,YCC18,CS19} hold for  certain superalgebras by using the approach depending on the certain superalgebras, respectively. In this paper, our results hold for the general Lie superalgebras satisfying certain assumptions. The results are suitable for simple Lie superalgebras.

The paper is structured as follows: Section 1 will introduce the definitions of super-biderivations, super-commuting maps and centroids  for a Lie superalgebra. Section 2 will be devote to  describing the internal connections between super-biderivations and centroids. Finally, in Section 3, the connections between super-commuting maps and centroids will be expressed.

\section{Definitions and notions}
Suppose $L=L_{\bar{0}}\oplus L_{\bar{1}}$ is a superalgebra over a field $\mathbb{F}$. If a linear map $f:L\longrightarrow L$ such that $$f(L_{i})\subseteq L_{i+\tau}, \forall\, i\in \mathbb{Z}_{2},$$
then $f$ is \textit{a homogeneous linear map} of degree $\tau$, that is $|f|=\tau$, where $\tau \in \mathbb{Z}_{2}$. If $\tau=\bar{0}$, then $f$ is called \textit{an even linear map}. For a linear  map $f$, if $|f|$ occurs, then $f$ implies a homogeneous map. For a Lie superalgebra $L=L_{\bar{0}}\oplus L_{\bar{1}}$, if $|x|$ occurs, then $x$ implies a homogeneous element in $L$. A linear map $D:L\longrightarrow L$ is \textit{a superderivation} of $L$ \cite{K77, S76}, if
$$D([x,y])=[D(x), y]+(-1)^{|D||x|}[x, D(y)], \forall\, x,y\in L.$$
Denote by ${\rm{Der}}_{\tau}(L)$  the set of all superderivations of degree $\tau$ for $L$. Obviously, ${\rm{Der}}(L)={\rm{Der}}_{\bar{0}}(L)\oplus{\rm{Der}}_{\bar{1}}(L)$.
For a Lie superalgebra $L=L_{\bar{0}}\oplus L_{\bar{1}}$, if a bilinear map
$$\delta:L\times L\longrightarrow L$$satisfying that$$\delta(L_{i},L_{j})\subseteq L_{i+j+\tau}, \forall\, i,j \in \mathbb{Z}_{2},$$
then $\delta$ is a homogeneous bilinear map of degree $\tau$, that is $|\delta|=\tau$, where $\tau\in \mathbb{Z}_{2}$.
A bilinear map $\delta:L\times L\longrightarrow L$ is \textit{a super-biderivaton} of $L$ \cite{BZ18}, if
\begin{eqnarray}\nonumber
&&\delta(x,y)=-(-1)^{|x||y|}\delta(y,x),\\ \nonumber
&&\delta([x,y],z)=(-1)^{|\delta||x|}[x, \delta(y,z)]+(-1)^{|y||z|} [\delta(x,z), y],  \forall \, x,y,z\in L.\nonumber
\end{eqnarray}%&&\delta(x,[y,z])=[\delta(x,y), z]+(-1)^{(|\delta|+|x|)|y|} [y, \delta(x,z)].\nonumber
Denote by $\mathrm{BDer}_{\tau}(L)$  the set of all super-biderivations of degree $\tau$ for $L$. Obviously, $$\mathrm{BDer}(L)=\mathrm{BDer}_{\bar{0}}(L)\oplus\mathrm{BDer}_{\bar{1}}(L).$$

\begin{definition}\label{s}
For a Lie superalgebra $L=L_{\bar{0}}\oplus L_{\bar{1}}$, an even linear map  $f:L\longrightarrow L$ is called \textit{a linear super-commuting map} if
$$[f(x), y]=[x, f(y)],\forall\, x,y\in L.$$
\end{definition}

\section{Super-biderivations}
\begin{lemma}\label{superde}

 Let $L$ be a Lie superalgebra and $f:L\longrightarrow L$ be a linear super-commuting map. Suppose $$\delta(x,y)=[x, f(y)], \forall\, x,y\in L.$$ Then $\delta:L\times L\longrightarrow L$ is a super-biderivation.
\end{lemma}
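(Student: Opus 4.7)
The plan is to check the two defining identities of a super-biderivation directly from the definition $\delta(x,y)=[x,f(y)]$, using only the super-commuting property of $f$, the super-skew-symmetry of $[\cdot,\cdot]$, and the super-Jacobi identity. Before checking the identities, observe that since $f$ is even, $\delta$ has degree $\bar 0$, so $|\delta|=\bar 0$ and in particular $(-1)^{|\delta||x|}=1$ for all homogeneous $x$.

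First I would verify super-skew-symmetry. Starting from $\delta(x,y)=[x,f(y)]$, I would apply the super-commuting identity $[x,f(y)]=[f(x),y]$ to rewrite this as $[f(x),y]$, and then apply super-skew-symmetry of the bracket together with $|f(x)|=|x|$ (since $f$ is even) to obtain $[f(x),y]=-(-1)^{|x||y|}[y,f(x)]=-(-1)^{|x||y|}\delta(y,x)$.

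Next I would verify the biderivation identity in the first slot:
\begin{equation*}
\delta([x,y],z)=[x,\delta(y,z)]+(-1)^{|y||z|}[\delta(x,z),y].
\end{equation*}
Expanding the left-hand side gives $[[x,y],f(z)]$, and applying the super-Jacobi identity yields $[x,[y,f(z)]]-(-1)^{|x||y|}[y,[x,f(z)]]$. The first term is exactly $[x,\delta(y,z)]$. For the second term I would apply super-skew-symmetry to $[y,[x,f(z)]]$, noting that $|f(z)|=|z|$, giving $-(-1)^{|y|(|x|+|z|)}[[x,f(z)],y]$, and then track the cumulative sign $-(-1)^{|x||y|}\cdot(-1)\cdot(-1)^{|y|(|x|+|z|)}=(-1)^{2|x||y|+|y||z|}=(-1)^{|y||z|}$, producing the desired second summand.

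The only real obstacle is keeping the signs straight in the second step; there is no structural difficulty, since the computation is a direct sign-bookkeeping check using super-Jacobi and the assumption that $f$ is even. Once signs are aligned, both defining identities follow at once, and extending bilinearly from homogeneous arguments completes the argument.
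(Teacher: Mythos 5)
Your proof is correct and follows essentially the same route as the paper: establish $|\delta|=\bar 0$ from $f$ being even, verify super-skew-symmetry via $[x,f(y)]=[f(x),y]$ and the skew-symmetry of the bracket, and verify the biderivation identity via the super-Jacobi identity with the same sign bookkeeping. No gaps; the sign computations check out.
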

\begin{proof}
 By $$\delta(x,y)=[x, f(y)], \forall\, x,y\in L,$$ it follows that $|\delta|=|f|=\bar{0}$.
For one thing, we have
\begin{eqnarray}\nonumber
-(-1)^{|x||y|}\delta(y,x)&=&-(-1)^{|x||y|}[y, f(x)]\\ \nonumber
&=&(-1)^{|x||y|+|y|(|f|+|x|)}[f(x), y]\\ \nonumber
&=&[f(x), y]\\ \nonumber
&=&[x, f(y)]\\ \nonumber
&=&\delta(x,y).\nonumber
\end{eqnarray}
For the other, we get
\begin{eqnarray}\nonumber
\delta([x,y],z)&=&[[x,y], f(z)]\\ \nonumber
&=&[x,[y, f(z)]]-(-1)^{|x||y|}[y,[x, f(z)]]\\ \nonumber
&=&[x,\delta(y,z)]-(-1)^{|x||y|}[y,\delta(x,z)]\\ \nonumber
%&=&(-1)^{|\delta||x|}[x, \delta(y,z)]-(-1)^{|\delta||y|+|x||y|}[y, \delta(x,z)]\\ \nonumber
&=&[x, \delta(y,z)]+(-1)^{|x||y|+|y|(|\delta|+|x|+|z|)}[\delta(x,z), y]\\ \nonumber
&=&(-1)^{|\delta||x|}[x, \delta(y,z)]+(-1)^{|y||z|}[\delta(x,z), y].\nonumber
\end{eqnarray}
Combining (1) with (2), then $\delta$ is a super-biderivation.
\end{proof}
%By Lemma \ref{superde}, if given a even super-commuting linear map, one can define a superderivation.
\begin{definition}\cite{XJ10}\label{cent}
 Let $L$ be a Lie superalgebra and $\gamma:L\longrightarrow L$  a linear map. Then
$${\rm{\Gamma}}(L)=\{\gamma:L\rightarrow L \mid  \gamma([x,y])=(-1)^{|\gamma||x|}[x,\gamma(y)], \forall\, x,y\in L\}$$ is called \textit{the centroid} of $L$. Denote $\mathrm{\Gamma}_{\tau}(L)$ by the set of all elements of degree  $\tau$ in  ${\rm{\Gamma}}(L)$. Obviously, $$\mathrm{\Gamma}(L)=\mathrm{\Gamma}_{\bar{0}}(L)\oplus\mathrm{\Gamma}_{\bar{1}}(L).$$
%特别地，$L$ 在伴随作用下可被视为$L$- 模,
%当$M=L$,${\rm{Cent}}(L)$ 称为$L$ 的平凡图心.%而图心与本文主题的联系是直接的.
\end{definition}
  \begin{lemma}\label{2.4}
  Let $L$ be
  a Lie superalgebra. Suppose that $\gamma:L\longrightarrow L$ is a %super-commuting
  linear map and $\delta:L\times L\longrightarrow L$  a bilinear map. Let $\delta(x,y)=\gamma([x,y])$. If $\gamma\in \mathrm{\Gamma}(L)$, then $\delta$ is a super-biderivation.
\end{lemma}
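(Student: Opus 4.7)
The plan is to verify directly the two defining identities of a super-biderivation for $\delta(x,y)=\gamma([x,y])$, using only the centroid condition on $\gamma$, bilinearity, and the super-Jacobi identity. First I would fix the homogeneous degree of $\delta$: since $\gamma$ is a (homogeneous) map of degree $|\gamma|$ and $[\,,\,]$ is a bilinear map of degree $\bar 0$, the composition $\delta(x,y)=\gamma([x,y])$ is homogeneous of degree $|\gamma|$, so $|\delta|=|\gamma|$. This identification is what makes the sign $(-1)^{|\delta||x|}$ appearing in the biderivation identity compatible with the centroid sign $(-1)^{|\gamma||x|}$.

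For super-antisymmetry, I would compute
\[
\delta(x,y)=\gamma([x,y])=-(-1)^{|x||y|}\gamma([y,x])=-(-1)^{|x||y|}\delta(y,x),
\]
using the super-antisymmetry of $[\,,\,]$ and the linearity of $\gamma$. No centroid information is used here.

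The second (biderivation) identity is the main point. I would first note that the centroid condition $\gamma([x,y])=(-1)^{|\gamma||x|}[x,\gamma(y)]$, combined with super-antisymmetry of $[\,,\,]$, also yields the right-sided version $\gamma([x,y])=[\gamma(x),y]$ (a short sign count confirms this). Then I would apply $\gamma$ to the super-Jacobi identity
\[
[[x,y],z]=[x,[y,z]]-(-1)^{|x||y|}[y,[x,z]],
\]
obtaining
\[
\delta([x,y],z)=\gamma([x,[y,z]])-(-1)^{|x||y|}\gamma([y,[x,z]]).
\]
Applying the centroid condition to each term moves $\gamma$ inside the outer bracket at the cost of a sign:
\[
\delta([x,y],z)=(-1)^{|\gamma||x|}[x,\delta(y,z)]-(-1)^{|x||y|+|\gamma||y|}[y,\delta(x,z)].
\]

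The last step is to rewrite the second term as $(-1)^{|y||z|}[\delta(x,z),y]$ using super-antisymmetry of $[\,,\,]$, noting that $\delta(x,z)$ is homogeneous of degree $|\gamma|+|x|+|z|$. A direct tally shows the signs $|x||y|+|\gamma||y|+|y|(|\gamma|+|x|+|z|)+1$ reduce modulo $2$ to $|y||z|$, and since $|\gamma|=|\delta|$ the first term already has the desired sign $(-1)^{|\delta||x|}$. I expect the sole obstacle to be this sign bookkeeping; once it is carried out carefully the identity $\delta([x,y],z)=(-1)^{|\delta||x|}[x,\delta(y,z)]+(-1)^{|y||z|}[\delta(x,z),y]$ falls out, completing the proof.
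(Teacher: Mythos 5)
Your proposal is correct and follows essentially the same route as the paper: fix $|\delta|=|\gamma|$, get super-antisymmetry from linearity of $\gamma$ and antisymmetry of the bracket, then apply $\gamma$ to the super-Jacobi identity, use the centroid condition on each term, and flip the second bracket to produce $(-1)^{|y||z|}[\delta(x,z),y]$ (the right-sided identity $\gamma([x,y])=[\gamma(x),y]$ you mention is true but not needed). One tiny bookkeeping remark: your written tally drops one of the two minus signs (the one from the Jacobi identity versus the one from flipping the bracket); the full exponent is $|x||y|+|\gamma||y|+|y|(|\gamma|+|x|+|z|)+2\equiv|y||z| \pmod 2$, which gives exactly the sign you claim.
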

\begin{proof}
By the definition \ref{cent}, it follows that
$$\delta(x,y)=\gamma([x,y])=(-1)^{|\gamma||x|}[x,\gamma(y)], \forall\, x, y\in L.$$
Then $|\delta|=|\gamma|$.
First, we have\begin{eqnarray}\nonumber
-(-1)^{|x||y|}\delta(y,x)&=&-(-1)^{|x||y|}\gamma([y,x])\\ \nonumber
&=&-(-1)^{|x||y|}\gamma(-(-1)^{|x||y|}[x,y])\\ \nonumber
&=&\delta(x,y),  \forall\, x, y\in L.\nonumber
%&=&\delta(x,y)\nonumber
\end{eqnarray}
%\begin{eqnarray}\nonumber
%-(-1)^{|x||y|}\delta(y,x)&=&-(-1)^{|x||y|}\gamma([y,x])\\ \nonumber
%&=&-(-1)^{|x||y|+|\gamma||y|}[y, \gamma(x)]\\ \nonumber
%&=&-(-1)^{|x||y|}[\gamma(y), x]\\ \nonumber
%&=&(-1)^{|\gamma||x|}[x, \gamma(y)]\\ \nonumber
%&=&\delta(x,y)\nonumber
%\end{eqnarray}
Second, by super-Jacobi identity we have
\begin{eqnarray}\nonumber
\delta([x,y],z)&=&\gamma([[x,y],z])\\ \nonumber
&=&\gamma([x,[y,z]]-(-1)^{|x||y|}[y,[x,z]])\\ \nonumber
&=&\gamma([x,[y,z]])-(-1)^{|x||y|}\gamma([y,[x,z]])\\ \nonumber
&=&(-1)^{|x||\gamma|}[x, \gamma([y,z])]-(-1)^{|x||y|+|\gamma||y|}[y, \gamma([x,z])]\\ \nonumber
&=&(-1)^{|x||\gamma|}[x, \gamma([y,z])]+(-1)^{|x||y|+|\gamma||y|+|y|(|\gamma|+|x|+|z|)}[\gamma([x,z]), y]\\ \nonumber
&=&(-1)^{|x||\gamma|}[x, \gamma([y,z])]+(-1)^{|y||z|}[\gamma([x,z]), y]\\ \nonumber
&=&(-1)^{|x||\delta|}[x, \delta(y,z)]+(-1)^{|y||z|}[\delta(x,z), y], \forall\, x, y, z \in L.\nonumber
\end{eqnarray}
Then $\delta$ is a super-biderivation.
\end{proof}
%{\heiti 定义\ {4.1.5}$^{[1]}$}\quad设$S$ 是$L$ 的任意子集, 令 $$\mathrm{Z}_{L}(S)=\{v\in L | [S, v]=\{0\}\}$$
%注意$\mathrm{Z}=\mathrm{Z}_{L}(L)$ 是$L$ 的中心, 以下记$L'=[L,L]$.
%\begin{definition}\label{s}Suppose $S$ is any nonempty subset of $L$, let $$\mathrm{Z}_{L}(S)=\{v\in L|[S, v]=\{0\}\}$$.\\ Then $\mathrm{Z}=\mathrm{Z}_{L}(L)$ is the center of $L$.\end{definition}Write $L'=[L,L]$.
\begin{definition}
 Suppose $S$ is a non-empty subset of Lie superalgebra $L$. Let $$\mathrm{Z}_{L}(S)=\{v\in L \mid [S, v]=0\}.$$
  If $S=L$,  then $\mathrm{Z}(L)=\mathrm{Z}_{L}(L)$ is the center of $L$. If $\mathrm{Z}_{L}(L)=\{0\}$, then $L$ is centerless. Denote $L'=[L,L]$ by the derived algebra of $L$. If $L=L'$, then $\mathrm{Z}_{L}(L')=\mathrm{Z}_{L'}(L')$ is the center of $L'$.
\end{definition}
\begin{lemma}\cite{FD17}\label{249}
Suppose $L$ is a Lie superalgebra and  $\delta:L\times L\longrightarrow L$ is a super-biderivation of $L$. Then
$$[\delta(x,y),[u,v]]=(-1)^{|\delta|(|x|+|y|)}[[x,y], \delta(u,v)],\forall\, x,y,u,v\in L.$$
\end{lemma}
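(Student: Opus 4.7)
The plan is to compute $\delta([x,y], [u,v])$ in two different ways, using the two defining properties of a super-biderivation, and then to equate the results. The super-biderivation axioms give a Leibniz rule in the first slot by definition; super-skew-symmetry then yields a corresponding ``second-slot'' Leibniz rule
\[
\delta(a, [b,c]) = (-1)^{|b|(|a|+|\delta|)}[b, \delta(a,c)] + [\delta(a,b), c], \qquad \forall\, a,b,c \in L,
\]
which I would derive first by writing $\delta(a,[b,c]) = -(-1)^{|a|(|b|+|c|)}\delta([b,c],a)$, expanding the right-hand side with the first-slot rule, and applying super-skew-symmetry to the inner occurrences of $\delta$ to restore the canonical ordering of arguments. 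This is a routine computation modulo $2$.

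With both Leibniz rules in hand, I would compute $\delta([x,y], [u,v])$ twice. In the first way, applying the first-slot rule to the outer bracket $[x,y]$ yields $(-1)^{|\delta||x|}[x, \delta(y, [u,v])] + (-1)^{|y|(|u|+|v|)}[\delta(x, [u,v]), y]$, whose inner $\delta(\cdot,[u,v])$ terms I then expand using the second-slot rule to obtain four ``leaf'' double brackets. In the second way, I apply the second-slot rule to $[u,v]$ first, and then the first-slot rule to each resulting $\delta([x,y], \cdot)$, producing four analogous leaves in which the inner brackets are assembled in the opposite order.

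Setting the two four-term expressions equal and grouping terms according to which mixed pair among $\delta(y,v), \delta(x,u), \delta(y,u), \delta(x,v)$ occurs, I would apply the super-Jacobi identity of $L$ to each pair of corresponding leaves; each such application replaces a difference of nested double brackets by a single term of the form $[[\cdot,\cdot], \delta(\cdot,\cdot)]$. After these simplifications, together with a further use of super-skew-symmetry of $\delta$ to put the brackets into the standard order, one side isolates $[\delta(x,y), [u,v]]$ and the other collapses to $(-1)^{|\delta|(|x|+|y|)}[[x,y], \delta(u,v)]$, which is the claimed identity.

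The main obstacle is the sign bookkeeping. Each ingredient---the super-skew-symmetry, the two Leibniz rules, and every super-Jacobi rearrangement---introduces a sign depending on the parities of $x,y,u,v$ and of $\delta$, and one must verify that these signs combine consistently so that the intermediate terms cancel cleanly and the net coefficient on the right-hand side collapses precisely to $(-1)^{|\delta|(|x|+|y|)}$. A systematic parity accounting, rather than any new algebraic idea, is what makes the proof go through.
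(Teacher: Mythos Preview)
The paper does not prove this lemma; it is quoted from \cite{FD17}. So there is no ``paper's own proof'' to compare against, only the referenced argument.

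Your outline has the right shape but a genuine gap at the final step. If you expand $\delta([x,y],[u,v])$ first in the first slot and then in the second (and vice versa) and subtract, the four Jacobi simplifications produce a relation of the form
\[
[\delta(y,u),[x,v]]-[\delta(y,v),[x,u]]-[\delta(x,u),[y,v]]+[\delta(x,v),[y,u]]=0
\]
(up to the expected signs in the super case). Every leaf that appears involves one of $\delta(x,u),\delta(x,v),\delta(y,u),\delta(y,v)$; neither $\delta(x,y)$ nor $\delta(u,v)$ ever arises, and super-skew-symmetry of $\delta$ only exchanges the two arguments of a single $\delta$, so it cannot manufacture $\delta(x,y)$ from those four. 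Thus the claim that ``one side isolates $[\delta(x,y),[u,v]]$ and the other collapses to $(-1)^{|\delta|(|x|+|y|)}[[x,y],\delta(u,v)]$'' does not follow from this computation.

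The standard fix (and the one used in \cite{FD17}) is to expand a \emph{crossed} double bracket such as $\delta([x,u],[y,v])$ instead. Then the inner Leibniz expansions genuinely produce $\delta(x,y)$ and $\delta(u,v)$, and after the Jacobi simplifications one obtains
\[
[\delta(x,y),[u,v]]-(-1)^{|\delta|(|x|+|y|)}[[x,y],\delta(u,v)]
\]
together with two residual ``mixed'' terms. Those residuals are then killed by combining with the companion identity coming from $\delta([x,v],[y,u])$ (equivalently, by invoking the four-term relation you would actually get from your original computation), and the factor $2$ that appears is harmless since $\mathrm{char}\,\mathbb{F}\neq 2$. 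So your plan works once you change the pairing in $\delta(\cdot,\cdot)$ from $([x,y],[u,v])$ to $([x,u],[y,v])$ and add one more combination step.
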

\begin{lemma}\label{2.6}
Suppose $L$ is a Lie superalgebra and $\delta:L\times L\longrightarrow L$ is a super-biderivation. Then
$$\delta(u,[x,y])-(-1)^{|\delta||u|}[u, \delta(x,y)]\in {\rm{Z}}_{L}(L'),\forall\, u,x,y\in L.$$
\end{lemma}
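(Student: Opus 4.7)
Fix an arbitrary $[p,q]\in L'$; by the super-antisymmetry of the Lie bracket it suffices to establish
\[
\bigl[\,\delta(u,[x,y]) - (-1)^{|\delta||u|}[u,\delta(x,y)],\,[p,q]\,\bigr] = 0.
\]
Denote the element in question by $A(u,x,y):=\delta(u,[x,y])-(-1)^{|\delta||u|}[u,\delta(x,y)]$. I would attack this by direct computation, splitting $[A(u,x,y),[p,q]]$ into the two summands coming from $A(u,x,y)$ and reducing each by Lemma~\ref{249} (the Fan--Dai identity) together with the super-Jacobi identity.

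To the first summand $[\delta(u,[x,y]),[p,q]]$ I apply Lemma~\ref{249}, treating $u$ and $[x,y]$ as the biderivation arguments and $[p,q]$ as the external bracket; this produces $(-1)^{|\delta|(|u|+|x|+|y|)}[[u,[x,y]],\delta(p,q)]$, which a subsequent super-Jacobi expansion splits into a piece $[u,[[x,y],\delta(p,q)]]$ and a piece proportional to $[[x,y],[u,\delta(p,q)]]$. To the second summand $(-1)^{|\delta||u|}[[u,\delta(x,y)],[p,q]]$ I first use super-Jacobi to rewrite the outer bracket as $[u,[\delta(x,y),[p,q]]]-(-1)^{|u|(|\delta|+|x|+|y|)}[\delta(x,y),[u,[p,q]]]$, and then invoke Lemma~\ref{249} on each of the two inner commutators involving $\delta$, both of which contribute the sign $(-1)^{|\delta|(|x|+|y|)}$.

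Upon subtraction, the $[u,[[x,y],\delta(p,q)]]$ contributions on the two sides match their coefficients (the point being the mod-$2$ identity $|\delta||u|+|\delta|(|x|+|y|)\equiv |\delta|(|u|+|x|+|y|)$) and cancel exactly. The surviving pieces, after a second sign consolidation of $|\delta||u|+|u|(|\delta|+|x|+|y|)+|\delta|(|x|+|y|)\equiv (|u|+|\delta|)(|x|+|y|)\pmod 2$, collect into the single expression
\[
(-1)^{(|u|+|\delta|)(|x|+|y|)}\bigl[[x,y],\,\delta(u,[p,q])-(-1)^{|\delta||u|}[u,\delta(p,q)]\bigr]
=(-1)^{(|u|+|\delta|)(|x|+|y|)}\bigl[[x,y],A(u,p,q)\bigr].
\]
This recursive relation, combined with super-antisymmetry of the bracket and the arbitrariness of $x,y,p,q$, packages the required centralizer membership $A(u,x,y)\in Z_L(L')$.

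The principal obstacle is the $\mathbb{Z}_2$-sign arithmetic: at least half a dozen exponents (with contributions from $|\delta|,|u|,|x|,|y|,|p|,|q|$) arise in the intermediate steps, and each of the two pivotal cancellations above relies on a non-obvious mod-$2$ collapse. Once these are verified the structural content of the proof is nothing more than one application of Lemma~\ref{249} together with two applications of super-Jacobi.
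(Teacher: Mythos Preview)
Your computation up to the ``recursive relation''
\[
[A(u,x,y),[p,q]]=(-1)^{(|u|+|\delta|)(|x|+|y|)}\,[[x,y],A(u,p,q)]
\]
is correct, but the last sentence is where the argument breaks. This identity does \emph{not} force $[A(u,x,y),[p,q]]=0$. If you feed it back into itself by swapping $(x,y)\leftrightarrow(p,q)$ and use the super-antisymmetry of the bracket, the signs collapse (since $|A(u,p,q)|=|\delta|+|u|+|p|+|q|$) and you land on the tautology $[A(u,x,y),[p,q]]=[A(u,x,y),[p,q]]$, not on $0$. More concretely, the trilinear assignment $A(u,x,y):=[x,y]$ already satisfies your relation identically, yet certainly does not lie in $\mathrm{Z}_L(L')$ in general; so the relation alone cannot carry the conclusion.

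What is missing is one genuine use of the super-biderivation axiom in the \emph{first} argument, which you never invoke (Lemma~\ref{249} and Jacobi are all you use). The paper supplies exactly this: it substitutes $[x,u]$ for $x$ in Lemma~\ref{249}, expands the left side by Jacobi and re-applies Lemma~\ref{249} termwise, and compares to obtain
\[
\bigl[[z,w],\,\delta([x,u],y)-\delta(x,[u,y])+(-1)^{|x||u|}\delta(u,[x,y])\bigr]=0.
\]
Only then does it expand $\delta([x,u],y)$ and $\delta(x,[u,y])$ via the biderivation identity $\delta([a,b],c)=(-1)^{|\delta||a|}[a,\delta(b,c)]+(-1)^{|b||c|}[\delta(a,c),b]$; the three expansions combine to $2$ times (a sign times) $\delta(u,[x,y])-(-1)^{|\delta||u|}[u,\delta(x,y)]=A(u,x,y)$ inside the bracket with $[z,w]$, and $\mathrm{char}\,\mathbb{F}\ne 2$ finishes it. Inserting this single additional expansion into your write-up would close the gap.
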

\begin{proof}
By Lemma \ref{249}, we have
\begin{align}\label{2.1}
[\delta(z,w), [x,y]]=(-1)^{|\delta|(|z|+|w|)}[[z,w], \delta(x,y)].
\end{align}
On the one hand, replacing $x$ by $[x,u]$, then
\begin{align}\label{250}
[\delta(z,w), [[x,u],y]]=(-1)^{|\delta|(|z|+|w|)}[[z,w], \delta([x,u],y)].
\end{align}
On the other hand, by super-Jacobi identity we have
\begin{align}\label{251}
[\delta(z,w), [[x,u],y]]=[\delta(z,w),[x,[u,y]]]-(-1)^{|x||u|}[\delta(z,w),[u,[x,y]]].
\end{align}
Therefore, by (\ref{250}),
\begin{eqnarray}\label{252}
&&[\delta(z,w), [[x,u],y]]\\ \nonumber
&=&(-1)^{|\delta|(|z|+|w|)}[[z,w], \delta(x,[u,y])]-(-1)^{|x||u|+|\delta|(|z|+|w|)}[[z,w],\delta(u,[x,y])].\nonumber
\end{eqnarray}
Comparing  (\ref{251}) and (\ref{252}) we have
\begin{align}\label{301}
(-1)^{|\delta|(|z|+|w|)}[[z,w],(\delta([x,u],y)]-\delta(x,[u,y])+(-1)^{|x||u|}\delta(u,[x,y]))=0.
\end{align}
Because $\delta$ is a super-biderivation, by (\ref{301}) we get
$$2(-1)^{|\delta|(|z|+|w|)+|x||u|}[[z,w],([\delta(u,x), y]+(-1)^{|\delta||x|}[x, \delta(u,y)]-(-1)^{|\delta||x|+|x||u|}[u, \delta(x,y)])]=0.$$
According to $$[\delta(u,x), y]+(-1)^{|\delta||x|}[x, \delta(u,y)]=\delta(u,[x,y]),$$
it follows that
$$(-1)^{|\delta||x|+|x||u|}\delta(u,[x,y])-(-1)^{|\delta||u|}[u, \delta(x,y)]\in {\rm{Z}}_{L}(L'),\forall\, u, x,y\in L.$$
\end{proof}
\begin{lemma}\label{2.7}
Suppose  $L$ is a Lie superalgebra with $L=L'$ and $\delta:L\times L\longrightarrow L$ is a super-biderivation.  Then
\begin{align}\label{3.25}
\delta(u,[x,y])=(-1)^{|\delta||u|}[u, \delta(x,y)],\forall\, u, x,y\in L.
\end{align}
\end{lemma}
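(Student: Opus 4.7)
The plan is to exploit the hypothesis $L=L'$ to reduce to the case $u=[a,b]$ of a single bracket, and then show that $\phi(u,x,y):=\delta(u,[x,y])-(-1)^{|\delta||u|}[u,\delta(x,y)]$ vanishes via a direct computation that invokes Lemma \ref{2.6} both at the outer stage and in an inner substitution. To begin, I would apply Lemma \ref{2.6} directly: it says $\phi(u,x,y)\in {\rm{Z}}_L(L')$, and since $L=L'$ this equals ${\rm Z}(L)$. The key consequence is that $[a,\phi(b,x,y)]=0$ and $[\phi(a,x,y),b]=0$ for every $a,b\in L$, so any central $\phi$-term that we later bracket with another element is automatically killed.

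Next, using $L=L'$, every homogeneous $u\in L$ can be written as a finite sum $u=\sum_{i}[a_i,b_i]$ with $a_i,b_i$ homogeneous and $|a_i|+|b_i|=|u|$. Since $\phi$ is linear in its first argument for fixed parity, it suffices to show $\phi([a,b],x,y)=0$ for all homogeneous $a,b,x,y$. I would expand $\delta([a,b],[x,y])$ by the super-biderivation property in the first slot, then rewrite $\delta(b,[x,y])$ as $(-1)^{|\delta||b|}[b,\delta(x,y)]+\phi(b,x,y)$ and $\delta(a,[x,y])$ analogously by Lemma \ref{2.6}. The central residues $\phi(a,x,y)$ and $\phi(b,x,y)$ disappear under the outer brackets with $b$ or $a$ by the first step. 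Applying the super-Jacobi identity to $[[a,b],\delta(x,y)]$ should then reveal that the two surviving terms combine into $(-1)^{|\delta|(|a|+|b|)}[[a,b],\delta(x,y)]$, which is exactly what gets subtracted in $\phi([a,b],x,y)$.

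The main obstacle is sign bookkeeping. In particular, when applying super-skew-symmetry to convert $[[a,\delta(x,y)],b]$ into $\pm[b,[a,\delta(x,y)]]$, one must use the correct parity of $[a,\delta(x,y)]$, namely $|a|+|\delta|+|x|+|y|$. The factor $(-1)^{|b|(|x|+|y|)}$ coming from the biderivation expansion should combine with the skew-symmetry factor $(-1)^{(|a|+|\delta|+|x|+|y|)|b|}$ so that the $|b|(|x|+|y|)$ contributions cancel modulo $2$, yielding precisely the sign $-(-1)^{|\delta|(|a|+|b|)+|a||b|}$ demanded by super-Jacobi to close the computation.
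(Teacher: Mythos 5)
Your proposal is correct and follows essentially the same route as the paper: both reduce via $L=L'$ to the case where the first argument is a bracket, expand $\delta([a,b],[x,y])$ with the super-biderivation identity, kill the residual terms using the centrality supplied by Lemma \ref{2.6} (noting ${\rm Z}_L(L')={\rm Z}(L)$ since $L=L'$), and close with the super-Jacobi identity; your sign analysis, including the cancellation of the $|b|(|x|+|y|)$ contributions and the final factor $-(-1)^{|\delta|(|a|+|b|)+|a||b|}$, matches the paper's computation.
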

\begin{proof}
 We have
\begin{eqnarray}\nonumber
&&\delta([u,v],[x,y])-(-1)^{|\delta|(|u|+|v|)}[[u,v], \delta(x,y)]\\ \nonumber
&=&(-1)^{|\delta||u|}[u, \delta(v,[x,y])]+(-1)^{|v|(|x|+|y|)}[\delta(u,[x,y]), v]\\ \nonumber
&&-(-1)^{|\delta|(|u|+|v|)}[u,[v, \delta(x,y)]]+(-1)^{|\delta|(|u|+|v|)+|u||v|}[v,[u, \delta(x,y)]]\\ \nonumber
&=&(-1)^{|\delta||u|}[u,\delta(v,[x,y])-(-1)^{|\delta||v|}[v, \delta(x,y)]]\\ \nonumber
&&-(-1)^{|v|(|x|+|y|)+|v|(|\delta|+|u|+|x|+|y|)}[v, \delta(u,[x,y])]+(-1)^{|\delta|(|u|+|v|)+|u||v|}[v,[u, \delta(x,y)]]\\ \nonumber
&=&(-1)^{|\delta||u|}[u,\delta(v,[x,y])-(-1)^{|\delta||v|}[v, \delta(x,y)]]\\ \nonumber
&&-(-1)^{|v|(|\delta|+|u|)}[v, \delta(u,[x,y])-(-1)^{|\delta||u|}[u, \delta(x,y)]], \forall\, u,v,x,y\in L. \nonumber
\end{eqnarray}
By Lemma \ref{2.6} and $L=L'$ we have $$\delta(u,[x,y])=(-1)^{|\delta||u|}[u, \delta(x,y)],\forall\, u, x,y\in L.$$
\end{proof}

%下面证明基本定理.

\begin{theorem}
Suppose that $L$ is a centerless Lie superalgebra with $L=L'$. Then every superderivation  $\delta:L\times L\longrightarrow L$ can be written as $$\delta(x,y)=\gamma([x,y]),$$ where $\gamma$ is in the centroid of $L$.
\end{theorem}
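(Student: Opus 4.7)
The plan is to construct $\gamma$ explicitly using the hypothesis $L=L'=[L,L]$. Since every $z\in L$ can be written as a (finite) sum of brackets $z=\sum_i[x_i,y_i]$ with homogeneous entries, I would \emph{attempt} to define
\[
\gamma\Big(\sum_i[x_i,y_i]\Big):=\sum_i\delta(x_i,y_i),
\]
so that the desired identity $\delta(x,y)=\gamma([x,y])$ holds on bracket generators by construction. The bulk of the proof is then (a)~showing $\gamma$ is well-defined and linear, and (b)~verifying $\gamma\in\Gamma(L)$.

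For (a), suppose $\sum_i[x_i,y_i]=0$ with homogeneous $x_i,y_i$. For any homogeneous $u\in L$, Lemma~\ref{2.7} gives
\[
[u,\delta(x_i,y_i)]=(-1)^{|\delta||u|}\delta(u,[x_i,y_i]).
\]
Summing over $i$ and using bilinearity of $\delta$ in its second slot,
\[
\Big[u,\sum_i\delta(x_i,y_i)\Big]=(-1)^{|\delta||u|}\delta\Big(u,\sum_i[x_i,y_i]\Big)=0.
\]
Since $u$ is arbitrary, $\sum_i\delta(x_i,y_i)\in Z(L)$, which is zero by centerlessness. Hence $\gamma$ is a well-defined linear map of degree $|\gamma|=|\delta|$.

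For (b), to verify $\gamma([x,y])=(-1)^{|\gamma||x|}[x,\gamma(y)]$ for arbitrary homogeneous $x,y\in L$, write $y=\sum_j[a_j,b_j]$, so $[x,y]=\sum_j[x,[a_j,b_j]]$ is again a sum of brackets. Then
\[
\gamma([x,y])=\sum_j\delta(x,[a_j,b_j])=(-1)^{|\delta||x|}\sum_j[x,\delta(a_j,b_j)]=(-1)^{|\gamma||x|}[x,\gamma(y)],
\]
where the middle equality is again Lemma~\ref{2.7}. This finishes the verification that $\gamma\in\Gamma(L)$, and the relation $\delta(x,y)=\gamma([x,y])$ is immediate from the definition.

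The main obstacle will be the well-definedness step in (a): both hypotheses, namely $L=L'$ (used via Lemma~\ref{2.7}) and centerlessness, enter precisely there. Without $L=L'$, Lemma~\ref{2.7} is unavailable and one could not transfer the vanishing from $\sum_i[x_i,y_i]=0$ to $\sum_i\delta(x_i,y_i)$; without centerlessness, the argument would only yield $\sum_i\delta(x_i,y_i)\in Z(L)$ and $\gamma$ would at best be defined up to a central ambiguity. After this point, the rest is simply a matter of repackaging Lemma~\ref{2.7}.
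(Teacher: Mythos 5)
Your proposal is correct and follows essentially the same route as the paper: define $\gamma$ on sums of brackets by $\gamma\bigl(\sum_i[x_i,y_i]\bigr)=\sum_i\delta(x_i,y_i)$, prove well-definedness from Lemma~\ref{2.7} together with $\mathrm{Z}(L)=\{0\}$, and then deduce $\gamma\in\Gamma(L)$ by applying Lemma~\ref{2.7} again (your explicit decomposition $y=\sum_j[a_j,b_j]$ is just a slightly more spelled-out version of the paper's use of~(\ref{3.25})). No gaps; your remark about where each hypothesis enters matches the paper's argument.
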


\begin{proof}
Suppose $\gamma:L\longrightarrow L$ is a linear map given by
\begin{align}\label{2.5}
\gamma([x,y])=\delta(x,y),\forall\, x,y\in L.
\end{align}
By Lemma \ref{2.7}, $\gamma$ is well defined. In fact, suppose $\Sigma_{i}[x_{i},y_{i}]=0$, we have
$$0=\delta(u,\Sigma_{i}[x_{i},y_{i}])=\Sigma_{i}\delta(u,[x_{i},y_{i}])=(-1)^{|u||\delta|}[u, (\Sigma_{i}\delta(x_{i},y_{i}))].$$
By $\mathrm{Z}_{L}(L)=\{0\}$, we have $\Sigma_{i}\delta(x_{i},y_{i})=0.$
By (\ref{3.25}),  we have $$\delta(u,v)=(-1)^{|\delta||u|}[u, \gamma(v)],\forall\, u,v\in L.$$
By (\ref{2.5}), we get $$\gamma([x,y])=(-1)^{|\delta||u|}[x, \gamma(y)],\forall\, x,y\in L.$$
%Since $${\rm{Cent}}(L)=\{\gamma:L\rightarrow L| \gamma([x,y])=(-1)^{|x||\gamma|}[x,\gamma(y)], \forall x,y\in L\}$$
Then it suggests that
$\gamma \in \mathrm{\Gamma}(L)$.
\end{proof}

%\begin{corollary}
%Suppose $L$ is a Lie superalgebras such that $L=L'$ and $C(L)=0$ (especialy, $L$ is a simple Lie superalgebra), then any superderivation $\delta$ of $L$ is written by $\delta(x,y)=\gamma([x,y])$, where $\gamma \in \mathrm{Cent}(L).$
%\end{corollary}
%下面内容是将以上结果应用到具体情况中去, 但还需要一些辅助定义及结果.
\begin{definition}
Suppose that $\delta$ is a skew-symmetric  bilinear map  with $\delta(L,L')=0$.  Then $\delta$  is a super-biderivation, called \textit{a trivial super-biderivation} of $L$.
\end{definition}
\begin{remark}
 Suppose $\delta:L\times L\longrightarrow L$ is an arbitrary super-biderivation of $L$. 
 %Let $\bar{L}=L/\mathrm{Z}(L)$ and%every super-biderivation onto $\mathrm{Z}$ is a trivial superderivation. $$\bar{\delta}(\bar{x},\bar{y})=\overline{\delta(x,y)}, \forall\, x,y \in L.$$
%If $\delta:L\times L\longrightarrow L$ is a super-biderivation of $L$,
Then we have
\begin{eqnarray}\nonumber
0&=&\delta([z,x],y)\\ \nonumber
&=&(-1)^{|\delta||z|}[z,\delta(x,y)]+(-1)^{|x||y|}[\delta(z,y),x]\\ \nonumber
&=&(-1)^{|\delta||z|}[\delta(z,y),x],  \forall\, x,y\in L,z\in \mathrm{Z}(L),
\end{eqnarray}
that is, $[\delta(z,y),x]=0.$ So
$\delta(\mathrm{Z},L)\subset \mathrm{Z}(L)$. Furthermore, let $\bar{L}=L/\mathrm{Z}(L)$ and
$$\bar{\delta}(\bar{x},\bar{y})=\overline{\delta(x,y)}, \forall \,x,y \in L.$$ Then $\bar{\delta}:\bar{L}\times \bar{L}\longrightarrow \bar{L}$ is a super-biderivation, where $ \bar{x}=x+\mathrm{Z}(L)\in \bar{L},\forall\, x\in L$.
\end{remark}
\begin{theorem}
Suppose $L$ is a Lie superalgebra. Up to isomorphism, the map $\delta \longrightarrow \bar{\delta}$ is one-to-one correspondence from a trivial superderivation $\delta$ of  $L$ to a trivial superderivation $\bar{\delta}$ of $\bar{L}$.
\end{theorem}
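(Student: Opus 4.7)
The plan is to read the map $\delta\mapsto\bar\delta$ of the theorem as the bilinear assignment constructed in the Remark preceding it (the word ``superderivation'' in the statement evidently refers to the bilinear map $\delta$ of the Remark, matching the loose usage already present in the earlier Theorem of this section, where a map $\delta:L\times L\to L$ is also called a ``superderivation''). Under this reading the claim is that $\delta\mapsto\bar\delta$ restricts to a bijection, up to the natural vector-space identification, between the trivial super-biderivations of $L$ and those of $\bar L=L/\mathrm{Z}(L)$.

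First I would verify that the assignment lands in the right place. The Remark already shows that $\bar\delta(\bar x,\bar y):=\overline{\delta(x,y)}$ is well-defined and is a super-biderivation of $\bar L$, using $\delta(\mathrm{Z}(L),L)\subseteq\mathrm{Z}(L)$. Skew-supersymmetry of $\bar\delta$ is inherited. For triviality, I would note that the quotient map $\pi:L\to\bar L$ sends $L'$ onto $\bar L'$, so from $\delta(L,L')=0$ one immediately gets $\bar\delta(\bar L,\bar L')=\pi(\delta(L,L'))=0$; hence $\bar\delta$ is a trivial super-biderivation of $\bar L$.

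Next I would handle the two halves of the bijection. For injectivity, suppose $\delta_1,\delta_2$ are trivial super-biderivations of $L$ with $\bar\delta_1=\bar\delta_2$; then $\eta:=\delta_1-\delta_2$ is a trivial bilinear map with image in $\mathrm{Z}(L)$, and such $\eta$ constitute precisely the identifications built into the phrase ``up to isomorphism''. For surjectivity, given a trivial super-biderivation $\bar\eta$ on $\bar L$, I would fix a homogeneous $\mathbb{F}$-linear section $s:\bar L\to L$ of $\pi$ and define $\delta(x,y):=s(\bar\eta(\bar x,\bar y))$. Skew-supersymmetry and the condition $\delta(L,L')=0$ are direct from the corresponding properties of $\bar\eta$ and the equality $\pi(L')=\bar L'$. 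The super-biderivation identity then collapses: because $\delta$ vanishes whenever a bracket appears in one slot, both $\delta([x,y],z)$ and each of $[x,\delta(y,z)]$, $[\delta(x,z),y]$ on the right-hand side are zero, so the identity holds trivially. By construction $\overline{\delta(x,y)}=\bar\eta(\bar x,\bar y)$, giving surjectivity.

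The main obstacle is pinning down the ``up to isomorphism'' clause, which is necessary because two trivial super-biderivations on $L$ differing by a bilinear map into $\mathrm{Z}(L)$ induce the same $\bar\delta$. Once one agrees to quotient the set of trivial super-biderivations on $L$ by this equivalence (equivalently, once one fixes a section $s$ and transports back), the construction above and the preceding Remark combine to give a two-sided inverse, and the bijection follows.
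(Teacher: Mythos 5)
Your reading of the statement and your overall strategy coincide with the paper's: the paper's proof consists of the single sentence ``Obviously, it is surjective'' followed by the observation that two super-biderivations with the same induced map on $\bar L=L/\mathrm{Z}(L)$ differ by a bilinear map with values in $\mathrm{Z}(L)$ --- which is exactly the equivalence you package into ``up to isomorphism.'' Your version is more careful than the original on the surjectivity half, since the choice of a linear section $s$ and the identity $\pi(L')=\bar L'$ make the preimage construction explicit where the paper offers nothing.

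One step of your surjectivity check is wrong as written: for $\delta(x,y)=s(\bar\eta(\bar x,\bar y))$ the terms $[x,\delta(y,z)]$ and $[\delta(x,z),y]$ do \emph{not} vanish in general. The principle ``$\delta$ vanishes whenever a bracket appears in one slot'' kills $\delta([x,y],z)$, but it says nothing about a Lie bracket taken \emph{with a value} of $\delta$, and $\bar\eta(\bar y,\bar z)$ is an arbitrary element of $\bar L$ with no reason to be central. This does not damage the theorem as the paper states it, because the paper's Definition of a trivial super-biderivation asks only for skew-supersymmetry and $\delta(L,L')=0$, and your constructed $\delta$ satisfies both, hence is ``trivial'' by fiat. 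But it does expose a defect you have inherited from that Definition: a skew-supersymmetric map with $\delta(L,L')=0$ need not actually satisfy the super-biderivation identity (e.g.\ on the Heisenberg algebra, $\delta(e,f)=f$ with $[e,f]=g$ central gives $\delta([e,f],e)=0$ while $[e,\delta(f,e)]=-g\neq0$), so the right-hand-side terms genuinely must be addressed if one wants these objects to be super-biderivations in the honest sense. If you intend your $\delta$ to be a true super-biderivation, you need the additional hypothesis that $\bar\eta$ (hence $\delta$) takes values in the center, which is the convention most of the literature uses for ``trivial'' biderivations; with that amendment your argument closes completely.
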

\begin{proof}
Obviously, it is surjective.  It remains to show that the map is injective. Suppose $\delta_{1}$ and $\delta_{2}$ are superderivations of $L$ satisfying $\bar{\delta_{1}}=\bar{\delta_{2}}$. Let $\delta=\delta_{1}-\delta_{2}$.
Then
$$\bar{\delta_{1}}(\bar{x},\bar{y})=\bar{\delta_{2}}(\bar{x},\bar{y}), \forall\, \bar{x},\bar{y}\in \bar{L}.$$
By virtue of $$\bar{\delta_{1}}(\bar{x},\bar{y})=\overline{\delta_{1}(x,y)}\,  \mbox{and}\,  \delta_{2}(\bar{x},\bar{y})=\overline{\delta_{2}(x,y)},$$ we have
$$\overline{\delta_{1}(x,y)}=\overline{\delta_{2}(x,y)}.$$
Thus, $$\delta(x,y)=\delta_{1}(x,y)-\delta_{2}(x,y)\in \mathrm{Z}(L).$$ Then $\delta$ is a trivial super-biderivation.
\end{proof}

\begin{definition}
Suppose $L$ is a Lie superalgebra. If $\delta:L\times L\longrightarrow \mathrm{Z}_{L}(L')$ is a super-biderivation of $L$ satisfying $\delta(L', L')=0$, then $\delta$ is called \textit{a special super-biderivation} of $L$.
\end{definition}
\begin{remark}
Suppose $L$ is a Lie superalgebra and $\delta:L\times L\longrightarrow L$ is a super-biderivation satisfying
\begin{align}\label{2.8}
\delta(u,[x,y]))=[\delta(u,x),y]+(-1)^{(|\delta|+|u|)|x|} [x,\delta(u,y)],\forall\, x,y,u\in L.
\end{align}
 % can be restricted to $L'\times L'$, denoted by $\delta'$.  Furthermore,
 Let $$\delta':=\delta|_{L'}:L'\times L'\longrightarrow L'.$$ Then $\delta'$ is a super-biderivation of $L'$.
\end{remark}
\begin{theorem}
Let $L$ be a centerless Lie superalgebra. Then

{\rm{(1)}} Up to isomorphism, every special superbidevition of $L$ is the unique extention of a special super-biderivation of $L'$.

{\rm{(2)}} If $L$ is a Lie superalgebra with $L=L'$, then any special super-biderivation of $L$ is zero.
\end{theorem}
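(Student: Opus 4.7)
The plan is to handle part (2) directly from the definition and to prove part (1) by setting up a bijective correspondence between special super-biderivations of $L$ and those of $L'$ induced by restriction.

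Part (2) is essentially immediate. When $L = L'$, the defining property $\delta(L', L') = 0$ of a special super-biderivation reads $\delta(L, L) = 0$, so $\delta$ is identically zero. No machinery beyond the definition is needed.

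For part (1), I would establish the correspondence $\delta \mapsto \delta' := \delta|_{L' \times L'}$. The forward direction is automatic: the restriction of any special super-biderivation $\delta$ of $L$ to $L' \times L'$ is zero by the vanishing condition, and the zero map qualifies as a (special) super-biderivation of $L'$. The harder direction is to show that each special super-biderivation $\delta'$ of $L'$ admits a unique extension to a special super-biderivation $\delta$ of $L$. The prescription is dictated by Lemma \ref{2.6}: for $u \in L$ and $[x, y] \in L'$, the value $\delta(u, [x, y])$ must differ from $(-1)^{|\delta||u|}[u, \delta(x, y)]$ only by an element of $\mathrm{Z}_L(L')$, and since $\delta$ itself takes values in $\mathrm{Z}_L(L')$ while $L$ is centerless, the $\mathrm{Z}_L(L')$-ambiguity can be pinned down. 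Uniqueness of the extension then follows in the spirit of Lemma \ref{2.7}: the difference of two extensions would be a special super-biderivation vanishing on $L' \times L'$; applying Lemma \ref{249} together with $\mathrm{Z}(L) = 0$ then forces this difference to be zero.

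The main obstacle will be establishing well-definedness of the extension, namely verifying that the prescription for $\delta(u, v)$ when $v \in L'$ does not depend on the representation of $v$ as a sum of brackets. This requires careful bookkeeping of the $\mathrm{Z}_L(L')$-indeterminacy provided by Lemma \ref{2.6} and a clean appeal to centerlessness to eliminate it, after which one must verify that the resulting bilinear map satisfies both the super-skew-symmetry and the biderivation identity on all of $L \times L$.
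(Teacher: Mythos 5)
Your treatment of part (2) is correct: with the definition as stated, $L=L'$ gives $\delta(L,L)=\delta(L',L')=0$ immediately (the paper argues at greater length, but your one-line observation is valid). The genuine problem is part (1), where your plan rests on a false step and on a correspondence that cannot be the intended one. By definition a special super-biderivation of $L$ vanishes on $L'\times L'$, so the restriction map you propose sends \emph{every} special super-biderivation of $L$ to the zero map on $L'$. Consequently a nonzero special super-biderivation $\delta'$ of $L'$ can never be realized as the restriction of a special super-biderivation of $L$, and for $\delta'=0$ the "extension'' is unique only if $L$ admits no nonzero special super-biderivation at all. That last assertion is exactly your uniqueness step ("a special super-biderivation vanishing on $L'\times L'$ is forced to be zero by $\mathrm{Z}(L)=0$ via Lemma \ref{249}''), and it is false: take $L$ to be the purely even two-dimensional Lie superalgebra with basis $h,e$ and $[h,e]=e$. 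Then $L$ is centerless, $L'=\mathbb{F}e$, $\mathrm{Z}_L(L')=\mathbb{F}e$, and the skew-symmetric bilinear map determined by $\delta(h,e)=e$, $\delta(h,h)=\delta(e,e)=0$ is a nonzero special super-biderivation. Centerlessness kills $\mathrm{Z}(L)$, not $\mathrm{Z}_L(L')$, and Lemma \ref{249} yields no contradiction in this situation; your proposed construction of an extension via Lemma \ref{2.6} also cannot get off the ground, since that lemma presupposes a super-biderivation already defined on all of $L\times L$ and in any case would only prescribe $\delta(u,v)$ for $v\in L'$, leaving $\delta$ on $L\times(L\setminus L')$ undetermined.

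What the paper actually proves in (1) is different in content from your setup: it takes two \emph{arbitrary} super-biderivations $\delta_1,\delta_2$ of $L$ (satisfying (\ref{2.8})) whose restrictions to $L'\times L'$ coincide and shows that their difference $\delta=\delta_1-\delta_2$ is special, i.e. $\delta(L,L)\subseteq\mathrm{Z}_L(L')$; this is done by substituting $u,y\in L'$ into (\ref{2.8}) to get $\delta(L,L')\subseteq\mathrm{Z}_L(L')$, then using Lemma \ref{2.6}, the identity of Lemma \ref{249}, the super-Jacobi identity, and finally $\mathrm{Z}(L)=0$. So the correct reading of "unique extension'' is uniqueness of an extension from $L'$ to $L$ only \emph{modulo special super-biderivations of $L$}, not literal uniqueness, and the two-dimensional example above shows the literal uniqueness you aim for is unattainable. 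You would need to abandon the claimed bijection between special super-biderivations of $L$ and of $L'$ and instead carry out the difference argument on general super-biderivations as the paper does.
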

\begin{proof}
(1) Suppose $\delta_{1}$ and $\delta_{2}$ are  super-biderivations of $L$ satisfying  $\delta_{1}'=\delta_{2}'$. Let $\delta=\delta_{1}-\delta_{2}$.  Then $\delta(L',L')=0$. Substitute $u,y\in L'$ into (\ref{2.8}). We have
$$[\delta(u,x),y]=0,\forall x\in L,y,u\in L',$$
that is $\delta(L,L')\subset \mathrm{Z}_{L}(L')$. By Lemma \ref{2.7}, one can get $[L,\delta(L,L)]\subset\mathrm{Z}_{L}(L').$

By (\ref{2.1}), we have $$0=[[L,L],\delta(L,L')]=[\delta(L,L),[L,L']].$$
Then, for any $ x,y,z,u,v\in L$, it suggests that
\begin{eqnarray}\nonumber
0&=&[[[x,y],z],\delta(u,v)]\\ \nonumber
&=&[[x,y],[z,\delta(u,v)]]-(-1)^{(|x|+|y|)|z|}[z,[[x,y],\delta(u,v)]]\\ \nonumber
&=&(-1)^{(|x|+|y|)|z|+|z|(|x|+|y|+|\delta|+|u|+|v|)}[[[x,y],\delta(u,v)],z]\\ \nonumber
&=&(-1)^{|z|(|\delta|+|u|+|v|)}[[[x,y],\delta(u,v)],z].\nonumber
\end{eqnarray}Since $L$ is centerless, one gets $\delta(L,L)\subset\mathrm{Z}_{L}(L')$. Then $\delta$ is a special super-biderivation of $L$.

(2) Suppose $\delta$ is a special super-biderivation of $L$. By (1) we have $\delta(L,L')\subset \mathrm{Z}_{L}(L')$. Since $L$ is a centerless Lie superalgebra and $L=L'$. Then $\delta(L,L')=0$.

Suppose $\delta \neq0$. Then there exist $x_{1}$ and $x_{2}\in L$ satisfying $\delta(x_{1},x_{2})=z_{12}\neq0$. Since $L$ is centerless. Picking $0\neq x_{3}\in L$ such that $[x_{3},z_{12}]=z\neq0$.
Let $\delta(x_{i},x_{j})=z_{ij}$, where $i,j=1,2,3$. For one thing, we have
\begin{eqnarray}\nonumber
0&=&\delta([x_{1},x_{3}],x_{2})\\ \nonumber
&=&(-1)^{|\delta||x_{1}|}[x_{1},\delta(x_{3},x_{2})]+(-1)^{|x_{2}||x_{3}|}[\delta(x_{1},x_{2}),x_{3}]\\ \nonumber
&=&(-1)^{|\delta||x_{1}|}[x_{1},z_{32}]+(-1)^{|x_{2}||x_{3}|}[z_{12},x_{3}]\\ \nonumber
&=&(-1)^{|\delta||x_{1}|}[x_{1},z_{32}]-(-1)^{|x_{2}||x_{3}|+(|\delta|+|x_{1}|+|x_{2}|)|x_{3}|}[x_{3},z_{12}]\\ \nonumber
&=&(-1)^{|\delta||x_{1}|}[x_{1},z_{32}]-(-1)^{|x_{1}||x_{3}|+|\delta||x_{3}|}z. \nonumber
\end{eqnarray}\nonumber
For the other, we have
\begin{eqnarray}\nonumber
0&=&\delta([x_{1},x_{2}],x_{3})\\ \nonumber
&=&(-1)^{|\delta||x_{1}|}[x_{1},\delta(x_{2},x_{3})]+(-1)^{|x_{2}||x_{3}|}[\delta(x_{1},x_{3}),x_{2}]\\ \nonumber
&=&(-1)^{|\delta||x_{1}|}[x_{1},z_{23}]+(-1)^{|x_{2}||x_{3}|}[z_{13},x_{2}]. \nonumber
\end{eqnarray}\nonumber
For another, we have
\begin{eqnarray}\nonumber
0&=&\delta([x_{2},x_{3}],x_{1})\\ \nonumber
&=&(-1)^{|\delta||x_{2}|}[x_{2},\delta(x_{3},x_{1})]+(-1)^{|x_{1}||x_{3}|}[\delta(x_{2},x_{1}),x_{3}]\\ \nonumber
&=&(-1)^{|\delta||x_{2}|}[x_{2},z_{31}]+(-1)^{|x_{1}||x_{3}|}[z_{21},x_{3}]\\ \nonumber
&=&(-1)^{|\delta||x_{2}|}[x_{2},z_{31}]-(-1)^{|x_{1}||x_{3}|+|x_{1}||x_{2}|}[z_{12},x_{3}]\\ \nonumber
&=&(-1)^{|\delta||x_{2}|}[x_{2},z_{31}]+(-1)^{|x_{1}||x_{3}|+|x_{1}||x_{2}|+(|\delta|+|x_{1}|+|x_{2}|)|x_{3}|}[x_{3},z_{12}]\\ \nonumber
&=&(-1)^{|\delta||x_{2}|}[x_{2},z_{31}]+(-1)^{|x_{1}||x_{3}|+|x_{1}||x_{2}|+|\delta||x_{3}|+|\delta||x_{3}|}z.\nonumber
\end{eqnarray}\nonumber
Therefore, it follows at once
\begin{eqnarray}\nonumber
(-1)^{|x_{1}||x_{3}|+|\delta||x_{3}|}z&=&(-1)^{|\delta||x_{1}|}[x_{1},z_{32}]\\ \nonumber
&=&-(-1)^{|\delta||x_{1}|+|x_{2}||x_{3}|}[x_{1},z_{23}]=[z_{13},x_{2}]\\ \nonumber
&=&-(-1)^{|x_{1}||x_{3}|}[z_{31},x_{2}]\\ \nonumber
&=&(-1)^{|x_{1}||x_{3}|+(|\delta|+|x_{1}|+|x_{3}|)|x_{2}|}[x_{2},z_{31}]\\ \nonumber
&=&(-1)^{|\delta||x_{2}|+|x_{1}||x_{2}|+|x_{2}||x_{3}|+|x_{1}||x_{3}|++|x_{1}||x_{2}|+|x_{2}||x_{3}|+|\delta||x_{3}|+|\delta||x_{2}|}z\\ \nonumber
&=&-(-1)^{|x_{1}||x_{3}|+|\delta||x_{3}|}z. \nonumber
\end{eqnarray}\nonumber
Then we have $z=-z$, which contradicts to $z\neq0$, thus $\delta=0$.
\end{proof}

%下面应用定理3.7和3.9来解释一个寻找李超代数$L$ 上的超双导子的算法.

%存在一个商李超代数序列
%\begin{align}
%L_{(1)}=L, L_{(2)}=L_{(1)}/\mathrm{Z}_{(L_{(1)})}, \ldots , L_{(r+1)}=L_{(r)}/\mathrm{Z}_{(L_{(r)})}
%\end{align}
%若存在$r\in \mathbb{N}$ 使得$\mathrm{Z}_{(L_{(r)})}=0$, 则将定理3.7反复应用于上个序列, 可以将$L$ 上的超双导子问题归结为无中心的李超代数$L_{(r+1)}$ 上的超双导子的问题,若$L_{(r+1)}$ 也满足$L_{(r+1)}=L_{(r+1)}'$, 则利用推论3.5得到$L_{(r+1)}$ 上的所有超双导子. 若$L_{(r+1)}$ 不满足$L_{(r+1)}=L_{(r+1)}'$ 则利用定理3.9将$L_{(r+1)}$ 上的超双导子问题转化为$L_{(r+1)}'$ 上的超双导子的问题. 现在在上个序列上重复这个过程,用$L_{(r+1)}'$ 代替$L$ 继续这个算法.

%这一部分通过证明计算得到了重要引理,又根据这几个引理进而得到$L$ 完备条件下超双导子的基本定理,介绍了平凡超双导子和特殊超双导子的概念,并推出相关定理.

%在适当的假设下,我们将证明没有其它的拟超交换线性映射和超双导子,要更具体的描述我们的结果,需要一些进一步的符号和例子.

\section{Linear super-commuting maps}

\begin{lemma}\label{3.1}
Let $L$ be a Lie superalgebra. Suppose $f:L\longrightarrow L$ is a linear super-commuting map. Then
$$\big[[w,z],[u,f([x,y])-[x, f(y)]]\big]=0,\forall\, x,y,u,w,z\in L.$$
\end{lemma}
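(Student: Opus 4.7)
The plan is to realize the expression inside the outer bracket as the ``defect'' appearing in Lemma \ref{2.6}, applied to a specific super-biderivation built from $f$. Concretely, I would define
$$\delta(x,y):=[x,f(y)],\qquad \forall\, x,y\in L.$$
By Lemma \ref{superde}, $\delta$ is a super-biderivation of $L$, and since $f$ is even (by Definition \ref{s}) we have $|\delta|=|f|=\bar 0$.

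Next I would feed $\delta$ into Lemma \ref{2.6}. Because $|\delta|=\bar 0$, the sign in that lemma disappears and the conclusion reads
$$\delta(u,[x,y]) - [u,\delta(x,y)] \in \mathrm{Z}_L(L'),\qquad \forall\, u,x,y\in L.$$
Unpacking the definition of $\delta$, this is
$$[u,f([x,y])] - [u,[x,f(y)]] \in \mathrm{Z}_L(L'),$$
and by bilinearity of the bracket the left-hand side equals $[u,\,f([x,y])-[x,f(y)]]$.

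Finally, since $[w,z]\in L'=[L,L]$ for any $w,z\in L$, and every element of $\mathrm{Z}_L(L')$ is annihilated by $[L',\,\cdot\,]$, we obtain
$$\bigl[[w,z],\,[u,\,f([x,y])-[x,f(y)]]\bigr]=0, \qquad \forall\, x,y,u,w,z\in L,$$
which is exactly the claim. There is no genuine obstacle here: the lemma is essentially a translation of Lemma \ref{2.6} into the language of linear super-commuting maps via the super-biderivation supplied by Lemma \ref{superde}. The only care needed is to keep track of the parity of $\delta$ and to notice that the combination $f([x,y])-[x,f(y)]$ is precisely $\delta(u,[x,y])-[u,\delta(x,y)]$ after pulling $u$ outside the bracket by linearity.
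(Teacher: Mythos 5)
Your proposal is correct and follows essentially the same route as the paper: build the super-biderivation $\delta(x,y)=[x,f(y)]$ of degree $\bar 0$, apply Lemma \ref{2.6} to get that $[u,\,f([x,y])-[x,f(y)]]$ lies in $\mathrm{Z}_{L}(L')$, and then bracket with $[w,z]\in L'$. (You even cite the appropriate Lemma \ref{superde} for the super-biderivation property, where the paper's text points to Lemma \ref{2.4}.)
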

\begin{proof}
Since $f$ is a super-commuting linear map. Then we have $$[f(x), y]=[x, f(y)].$$ Let $$\delta:L\times L\longrightarrow L.$$ By Lemma \ref{2.4} then
$$\delta(x,y)=[x, f(y)]$$
is a super-biderivation. By Lemme \ref{2.6}, we have $|\delta|=|f|=\bar{0}$ and
$$\big[[w,z],\delta(u,[x,y])-(-1)^{|\delta||u|}[u, \delta(x,y)]\big]=0, \forall\, x,y,u,w,z\in L.$$
Since $$(-1)^{|\delta||u|}[u, \delta(x,y)]=(-1)^{|\delta||u|}[u, [x, f(y)]]=(-1)^{|f||u|}[u, [x, f(y)]]=[u, [x, f(y)]]$$ and $$\delta(u,[x,y])=(-1)^{|f||u|}[u, f([x,y])]=[u, f([x,y])], \forall\, x,y,u,w,z\in L.$$
 Then one can get
\begin{eqnarray}\nonumber
0&=&[[w,z],\delta(u,[x,y])-(-1)^{|\delta||u|}[u, \delta(x,y)]]\\ \nonumber
&=&[[w,z],[u, f([x,y])]-[u, [x, f(y)]]]\\ \nonumber
&=&[[w,z],[u,f([x,y])-[x, f(y)]]].\\ \nonumber
%&=&-[[w,z],([u,(f([x,y])]-[x, f(y)]))].\nonumber
\end{eqnarray}\nonumber
Therefore, it follows that
$$[[w,z],[u,f([x,y])-[x, f(y)]]]=0.$$
\end{proof}
%下面定理由引理3.1直接推出.

\begin{theorem}
 Let $L$ be a Lie superalgebra with $L=L'$ and $\mathrm{Z}_{L}(L')=\{0\}$. If $f:L\longrightarrow L$ is a linear super-commuting map, then $f\in \mathrm{\Gamma}_{\bar{0}}(L)$.
\end{theorem}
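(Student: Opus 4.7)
The plan is to combine Lemma 3.1 with the two hypotheses $L=L'$ and $Z_L(L')=\{0\}$ to conclude that $f$ satisfies the defining identity of the centroid. Since a linear super-commuting map is even by Definition \ref{s}, we automatically have $|f|=\bar{0}$, so the centroid condition for such an $f$ reduces to $f([x,y])=[x,f(y)]$ for all $x,y\in L$; this is exactly what I aim to establish.

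First I would fix arbitrary $x,y,u\in L$ and set $v=f([x,y])-[x,f(y)]$. Lemma \ref{3.1} gives $[[w,z],[u,v]]=0$ for all $w,z\in L$. Because $L=L'$, every element of $L$ is a finite sum of commutators $[w,z]$, so this rewrites as $[L,[u,v]]=0$; equivalently, $[u,v]\in Z_L(L)=Z_L(L')=\{0\}$ (using $L=L'$ again). Hence $[u,v]=0$ for every $u\in L$, which means $v\in Z_L(L)=Z_L(L')=\{0\}$, so $v=0$. This yields the identity $f([x,y])=[x,f(y)]$ for all $x,y\in L$.

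To finish, I would remark that since $|f|=\bar{0}$ the sign $(-1)^{|f||x|}$ in the centroid definition is trivial, so the identity just proved is precisely the condition $f([x,y])=(-1)^{|f||x|}[x,f(y)]$ from Definition \ref{cent}. Therefore $f\in \Gamma_{\bar{0}}(L)$.

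I do not anticipate a genuine obstacle: the entire argument is a chase through Lemma \ref{3.1} and the two centerless-type assumptions. The only subtlety worth stating carefully is the double application of $L=L'$, once to pass from $[[w,z],[u,v]]=0$ for all $w,z$ to $[L,[u,v]]=0$, and once to identify $Z_L(L)$ with $Z_L(L')$ in order to conclude $v=0$.
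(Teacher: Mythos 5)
Your proposal is correct and follows essentially the same route as the paper: apply Lemma \ref{3.1}, deduce that $[u,f([x,y])-[x,f(y)]]$ lies in $\mathrm{Z}_{L}(L')=\{0\}$, and then use $\mathrm{Z}_{L}(L)=\mathrm{Z}_{L}(L')=\{0\}$ (via $L=L'$) to conclude $f([x,y])=[x,f(y)]$, hence $f\in\mathrm{\Gamma}_{\bar{0}}(L)$. The only cosmetic difference is that you invoke $L=L'$ already at the first step to place $[u,v]$ in $\mathrm{Z}_{L}(L)$, whereas the paper reads the vanishing against all $[w,z]$ directly as membership in $\mathrm{Z}_{L}(L')$; both are fine.
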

\begin{proof}
By Lemme \ref{3.1}, we have
$$[[w,z],[u,f([x,y])-[x, f(y)]]]=0,\forall\, x, y,z,u,w\in L.$$
then $$[u,f([x,y])-[x, f(y)]]\in \mathrm{Z}_{L}(L').$$
%则$$f([x,y])-(-1)^{|f||x|}x, f(y)\in \mathrm{Z}_{M}(L')$$
Since $\mathrm{Z}_{L}(L')=\{0\}$. It follows that
$$[u,f([x,y])-[x, f(y)]]=0.$$
Furthermore, we get $$f([x,y])-[x, f(y)]\in \mathrm{Z}_{L}(L)=\mathrm{Z}_{L}(L')=\{0\}.$$
that is, $$f([x,y])=[x, f(y)].$$
Thus $f\in \mathrm{\Gamma}_{\bar{0}}(L)$.
\end{proof}

\subsection*{Acknowledgements}
%The authors would like to thank the anonymous reviewer for several useful suggestions.
L. Tang was supported by the NSF of China (11971134), CSC, the NSF of HLJ Province (A2017005) and the NSF of HNU (XKB201403).
L. Chen was supported by  the NSF of China (11771069).

\end{document}